\newcommand{\R}{\mathbb{R}}
\newcommand{\Kzcbf}{K_{\mathrm{zcbf}}}
\newcommand{\etal}{\textit{et al.}}
\newcommand{\la}{\lambda}
\newcommand*\diff{\mathop{}\!\mathrm{d}}
\newif\ifdraft
\newtheorem{definition}{\bfseries Definition}%[section]
\newtheorem{example}{\bfseries Example}%[section]
\newtheorem{assumption}{\it Assumption}%[section]
\newtheorem{theorem}{\bfseries Theorem}
\newtheorem{remark}{\bfseries Remark}
\newtheorem{problem}{\bfseries Problem}
\title{\LARGE \bf
Observer-based Control Barrier Functions for Safety Critical Systems
}
\author{}
\author{Yujie Wang and Xiangru Xu\thanks{Y. Wang and X. Xu are with the Department of Mechanical Engineering, University of Wisconsin-Madison,
        Madison, WI 53706, USA. Email:
\{ywang2835,xiangru.xu\}@wisc.edu.}}
\begin{document}

\maketitle
% \thispagestyle{empty}
% \pagestyle{empty}

%%%%%%%%%%%%%%%%%%%%%%%%%%%%%%%%%%%%%%%%%%%%%%%%%%%%%%%%%%%%%%%%%%%%%%%%%%%%%%%%

%---------------------------------------
%            Abstract                  |
%---------------------------------------

\begin{abstract}
This paper considers the safety-critical control design problem with output measurements. An observer-based safety control framework that integrates the estimation error quantified observer and the control barrier function (CBF) approach is proposed. The function approximation technique is employed to approximate the uncertainties introduced by the state estimation error, and an adaptive CBF approach is proposed to design the safe controller which is obtained by solving a convex quadratic program (QP). Theoretical results for CBFs with a relative degree 1 and a higher relative degree are given individually. The effectiveness of the proposed control approach is demonstrated by two numerical examples.
\end{abstract}

%---------------------------------------
%            Keywords                  |
%---------------------------------------

% \begin{keywords}
%   Function approximation technique, control barrier function, observers
% \end{keywords}

%%%%%%%%%%%%%%%%%%%%%%%%%%%%%%%%%%%%%%%%%%%%%%%%%%%%%%%%%%%%%%%%%%%%%%%%%%%%%%%%

%\IEEEpeerreviewmaketitle

%%%%%%%%%%%%%%%%%%%%%%%%%%%%%%%%%%%%%%%%%%%%%%%%%%%%%%%%%%%%%%%%%%%%%%%%%%%%%%%%

%---------------------------------------
%            MAIN PART                 |
%---------------------------------------
%=====================================================================
%        I   N   T   R   O   D   U   C   T   I   O   N               |
%=====================================================================
\section{Introduction}
\label{sec:introduction}
%Over the past decades, with the rapid development of control theory and machine learning technology, numerous control algorithms have been proposed for a large number of systems, such as robots \cite{slotine1987adaptive}, helicopters \cite{hoffmann2007quadrotor}, and self-driving cars \cite{lefevre2015learning}. The majority of these control algorithms focus on stability and robustness, whereas in recent years,
The safety of autonomous systems has drawn increasing attention in the past decades with various control techniques been developed \cite{aswani2013provably,mitchell2005time,althoff2014online,dong2017safety}. Control barrier function (CBF) has become a powerful tool for achieving safety in the form of set invariance \cite{ames2016control,Xu2015ADHS}, and has been applied to many scenarios  including adaptive cruise control \cite{xu2017correctness}, biped robots \cite{hsu2015control,nguyen20163d}, and UAVs \cite{wang2017safe}. Almost all the existing results using CBFs rely on accurate state information, which is hard to obtain in real applications. For example, in the absence of velocity sensors, the angular velocities of robot manipulators  cannot be obtained exactly;  even when the manipulator is equipped with velocity sensors, the velocity signals are usually contaminated by noise.

Various safe control methods have been developed in the absence of accurate state measurements \cite{dean2020guaranteeing,abu2021feedback,poonawala2021training,takano2018robust}. In \cite{dean2020guaranteeing}, a function mapping from outputs to states is learned via supervised learning techniques, and the controller is designed under the assumption that for any given output value, all possible state estimation error is bounded by a known constant.
Khalaf \etal \cite{abu2021feedback} proposed a controller synthesis approach involving feedback from pixels, which does not require feature extraction, object detection, or state estimation.
Poonawala \etal \cite{poonawala2021training} developed a method that trains classifiers for sensor-based control problems, bypassing the state estimation step.
Takano and Yamakita \cite{takano2018robust}  proposed a  QP-based controller with an unscented Kalman filter which
is capable of attenuating the effects of state disturbances and
measurement noises.
Nevertheless, assumptions in the aforementioned works are difficult to satisfy and limit their applicability in safety-critical control.

This paper considers the safety-critical control design problem with output measurements. The main contribution of this work lies in a novel observer-based CBF framework that integrates the estimation error quantified (EEQ) observer and the CBF approach, which generates provably safe controllers under mild conditions.  The residual terms introduced by the state estimation error are approximated by the function approximation technique (FAT) via a given set of basis functions with unknown weights. An adaptive CBF method is proposed to guarantee the safety of the controlled system, where the unknown weights are estimated by adaptive laws. The EEQ observer considered in this work can be not only the traditional asymptotic observer but also interval observers \cite{efimov2013control} and neural-network-based observers \cite{marchi2021safety,elkenawy2020diagonal}.

The rest of this paper is organized as follows. Section~\ref{sec:review} reviews some preliminaries about FAT,  CBF and estimation error quantified observers, and presents the problem statement. Section~\ref{sec:control} provides the main result of this paper for CBFs with a relative degree 1 and a higher relative degree, respectively. The effectiveness of the proposed control design method is demonstrated by simulations in Section~\ref{sec:simulation}. Finally, conclusions are drawn in Section~\ref{sec:concl}.

\section{Preliminaries \& Problem Statement}
\label{sec:review}
\subsection{Function Approximation Technique}\label{sec:FAT}

FAT is an effective tool for approximating time-varying unknown nonlinear functions \cite{huang2001sliding}.
Its basic idea is to express an unknown nonlinear function as the  combination of a set of given basis functions.  There are a lot of examples of FAT, such as the generalized Fourier series \cite{chen2005adaptive}, neural networks \cite{gong2001neural}, and differential equations \cite{izadbakhsh2018robust}.
In this paper, the basis functions $\varphi_i(t)$ are selected as trigonometric functions (the basis functions of Fourier series), which have been used in numerous papers \cite{bai2021adaptive,wang2016distributed,zirkohi2018direct,izadbakhsh2019fat}. Specifically,
$\varphi_i$ is defined as
\begin{equation}\label{dii}
  \varphi_{i}(t)=
  \begin{cases}
   \ 1, & i=0, \\
   \ \cos k \omega t, & i=2k-1, \\
   \ \sin k \omega t, & i=2k.
  \end{cases}
\end{equation}
Note that $\varphi_i(t)$ satisfies the orthonormal property, i.e., $\int_{t_1}^{t_2}\varphi_i(t)\varphi_j(t)dt=\delta_{ij}$, where $\delta_{ij}$ is the Kronecker delta. There are other  options for $\varphi_i$, including Bernstein polynomials, Legendre polynomials, and Chebyshev polynomials.

An arbitrary square integrable function $f(t): \R\rightarrow \R$ can be approximated by a generalized Fourier series in the interval $[t_1, t_2]$ as $f(t)=\sum_{i=1}^N \theta_i\varphi_i(t)+\epsilon_N(t)$ \cite{chen2005adaptive},
where $N$ is a given integer, $\theta_i$ is the corresponding coefficient, $\epsilon_N(t)$ is the truncation error satisfying $\lim_{N\to\infty}\int_{t_1}^{t_2}|\epsilon_N(t)|^2 \text{d} t=0$, and $\varphi_i(t), i=1,2,\cdots,N$ are defined in \eqref{dii}.
Since $\epsilon_N(t)$ vanishes as $N \to \infty$, $f(t)$ can be expressed as
\begin{equation*}
f(t)=\sum_{i=1}^\infty \theta_i \varphi_i(t).
\end{equation*}
Since $f(t)$ is an unknown function, $\theta_i$ cannot be directly calculated from the integral of $f(t)$. Hence, the majority of FAT-based controllers employ adaptive control techniques to estimate $\theta_i$ online, such that the estimation of $f(t)$ can be expressed as
\begin{equation}\label{hatf}
\hat{f}(t)=\sum_{i=1}^{N} \hat{\theta}_i\varphi_i(t)
\end{equation}
where $\hat{\theta}_i$ is the estimation of $\theta_i$ and governed by corresponding adaptive laws \cite{bai2021adaptive,huang2004adaptive,izadbakhsh2019fat}. For a vector function $f(t): \R\rightarrow \R^n$, the  approximation above holds for a set of vector parameters $\theta_i,\hat \theta_i\in\R^n$.

\subsection{Control Barrier Function}
Consider the following control affine system with output measurement:
\begin{IEEEeqnarray}{rCl}
\dot{x} &= & f(x) + g(x) u,\label{eqnsys}\\
y &=& l(x),\label{output}
\end{IEEEeqnarray}
where $x\in\mathbb{R}^n$ is the state, $u\in U\subset\mathbb{R}^m$ is the control input, $l: \mathbb{R}^n\to\mathbb{R}^k$ is the output measurement, $f: \mathbb{R}^n\to\mathbb{R}^n$ and $g:\mathbb{R}^n\to\mathbb{R}^{n\times m}$ are locally Lipchitz continuous functions.
A set $\mathcal{S}$ is called forward controlled invariant with respect to system \eqref{eqnsys} if for every $x_0 \in \mathcal{S}$, there exists a control signal $u(t)$ such that $x(t;t_0,x_0) \in \mathcal{S}$ for all $t\geq t_0$, where $x(t;t_0,x_0)$ denotes the solution of \eqref{eqnsys} at time $t$ with initial condition $x_0\in\mathbb{R}^n$ at time $t_0$. To simplify the discussion, we will use the same definition as above for the controlled invariance of  time-varying systems, which is slightly different from the definition given in  \cite{blanchini2008set}.

Consider control system \eqref{eqnsys} and a set $\mathcal{C} \subset \R^n$ defined by
\begin{equation}\label{setc}
    \mathcal{C} = \{ x \in \R^n : h(x) \geq 0\}
\end{equation}
for a continuously differentiable function $h: \R^n \to \R$ that has a relative degree 1. The function $h$ is called a (zeroing) CBF
if  there exists a constant $\gamma>0$ such that
\begin{align}\label{ineqZCBF}
& \sup_{u \in U}  \left[ L_f h(x) + L_g h(x) u + \gamma h(x)\right] \geq 0%\;\forall x \in \mathcal{D},
\end{align}
where $L_fh(x)=\frac{\partial h}{\partial x}f(x)$ and $L_gh(x)=\frac{\partial h}{\partial x}g(x)$ are Lie derivatives \cite{Xu2015ADHS}.
Given a CBF $h$, the set of all control values that satisfy \eqref{ineqZCBF} for all $x\in\R^n$ is defined as:
It was proven in \cite{Xu2015ADHS} that any Lipschitz continuous controller $u(x) \in \Kzcbf(x)$ for every $x\in\R^n$ will guarantee the forward invariance of $\mathcal{C}$. The provably safe control law is obtained by solving an online quadratic program (QP) that includes the control barrier condition as its constraint.

A $C^r$ function $h(x): \R^n \to \R$ with a relative degree $r$ where $r\geq 2$ is called a (zeroing) CBF if  there exists a column vector ${\bf a}\in\R^r$ such that $\forall x\in\R^n$,
\begin{align}\label{ineq:ZCBF2}
	& \sup_{u \in U}  [L_g L_f^{r-1}h(x)u +L_f^rh(x)+{\bf a}'\eta(x) ] \geq 0
\end{align}
where $\eta(x)=[L_f^{r-1}h, L_f^{r-2}h,...,h]^{\top}\in\R^r$, and ${\bf a}=[a_1,...,a_r]^{\top}\in\R^r$ is chosen such that the roots of $p_0^r(\la)=\la^r+a_1\la^{r-1}+...+a_{r-1}\la+a_r$ are all negative reals $-\la_1,...,-\la_r<0$.
Define functions $s_k(x(t))$ for $k=0,1,...,r$ as follows:
\begin{align}\label{liftedh}
s_0(x(t))&=h(x(t)),\;s_{k}(x(t))=(\frac{\diff}{\diff t}+\la_k)s_{k-1}.
\end{align}
It was shown in \cite{nguyen2016exponential} that any controller $u(x) \in \{ u \in U : L_g L_f^{r-1}h_0(x)u +L_f^rh_0(x)+{\bf a}'\eta(x) ] \geq 0\}$ that is Lipschitz  will guarantee the forward invariance of $\mathcal{C}$. The time-varying CBF with a general relative degree and its safety guarantee for a time-varying system were discussed in \cite{xu2018constrained}.

\subsection{Estimation Error Quantified Observer }
An observer for system \eqref{eqnsys}-\eqref{output} is given as:
\begin{equation}
    \dot{\hat{x}}=v(\hat{x},y,u)\label{observer}
\end{equation}
where $\hat{x}$ is the estimated state, $u$ is the input, and $y$ is the output. Define the state estimation error as
\begin{align}
e=\hat x-x.\label{eqe}
\end{align}
%$e=\hat x-x$.
Then the \emph{error dynamics} is given as
\begin{equation}\label{eqerror}
    \dot{e}=\bar{v}(e,\hat{x},y,u)
\end{equation}
where $\bar{v}(e,\hat{x},y,u)=v(\hat{x},y,u)-f(\hat x-e)-g(\hat x-e)u$.

In this paper, we consider the \emph{estimation error quantified (EEQ) observer} that is a generalization of the traditional asymptotic observer that requires the state estimation error to converge to zero. The definition of the EEQ observer is introduced below.
\begin{definition}
An observer is called an EEQ observer for system \eqref{eqnsys}-\eqref{output} if it provides a state estimation $\hat{x}(t)$ such that
\begin{equation}\label{observerdistance}
    \|\hat x(t) -x(t)\|\leq M(t,x_0,\hat x_0),\quad \forall t\geq 0,
\end{equation}
%holds true for all $t \geq 0$,
where  $M(t,x_0,\hat x_0): \R_{\geq 0}\times\mathbb{R}^n\times\mathbb{R}^n\rightarrow \R_{\geq 0}$ is a known time-varying function whose values are  non-negative.
\end{definition}

To simply the notation, we will use $M(t)$ for $M(t,x_0,\hat x_0)$ in the following. The EEQ observer subsumes many types of common observers.
\subsubsection{Interval Observers}
The interval observer is an observer that provides an estimation interval for the true states by using the input-output measurement  \cite{efimov2013interval,moisan2007near}.
Specifically, an interval observer has two dynamic systems that provide the upper bound $\bar{x}(t)$ and lower bound $\underline{x}(t)$ of the true state $x(t)$, respectively.
If its state estimation  is selected as $\hat{x}(t)=\frac{\bar{x}(t)+\underline{x}(t)}{2}$, then an interval observer is an EEQ observer with $M(t)$ shown in \eqref{observerdistance}  chosen as
\begin{equation}
    M(t)=\frac{1}{2}\|\bar{x}(t)-\underline{x}(t)\|.
\end{equation}

\subsubsection{Exponentially Stable Observers}
According to \cite[p.~150]{khalil2002nonlinear}, an (global) exponentially stable observer requires that the equilibrium point $e=0$ of the error system shown in \eqref{eqerror} is exponentially stable, i.e., there exist positive constants $k$, and $\lambda$ such that $\|e(t)\|\leq k\|e(0)\|\text{exp}(-\lambda t)$.
An exponentially stable observer is an EEQ observer with $M(t)$ shown in \eqref{observerdistance}  chosen as
\begin{equation}\label{expobserver}
    M(t)=D\text{exp}(-\lambda t)
\end{equation}
where $D$ is a positive constant satisfying $D\geq k\|e(0)\|$.
Note that any exponentially stable observer  can be employed in the proposed control scheme.

\subsubsection{Neural-Network-Based Observer}

Deep neural networks can be employed to design observers because of its universal approximation property \cite{marchi2021safety,tabuada2020universal,elkenawy2020diagonal}.
For example, in \cite{marchi2021safety}, a neural network is trained to approximate the function $\psi$, which recovers $x$ from $y$ by $x=\psi(y)$, such that the state estimation is given by a trained function as $\hat{x}=r(y,\alpha)$, where $\alpha$ is the training parameter.
There are many techniques that can be used to train the neural networks, such as stochastic gradient decent and Adam algorithm.
Nevertheless, since the neural network is trained on the training set $E_S$, its approximation accuracy on the complete dataset $E$ is not always guaranteed even if the approximation error is small enough on $E_S$. Marchi \etal \cite{marchi2021safety} pointed out that if $E_S$ is a $\delta$-cover of $E$, any continuous function $\psi$ can be approximated by a function $r=\phi+A$, where $\phi$ is a monotone function and $A$ is a linear map, with the generalization error
\begin{equation}\label{eqNNobser}
    \|\psi-r\|_{L^\infty(E)}\leq 3\|\psi-r\|_{L^\infty(E_S)}+2\omega_\psi(\delta)+2\|A\|_\infty\delta
\end{equation}
where $\omega_\psi$ is a modulus of continuity of $\psi$ on $E$ and $\|A\|_\infty$ denotes the operator $\infty$-norm of the map $A$.
Thus, the neural-network-based observer $r$ designed in \cite{marchi2021safety} is an EEQ observer with  $M(t)$ shown in \eqref{observerdistance}  chosen as
\begin{equation}
    M(t) = \beta
\end{equation}
where $\beta$ is the constant on the right hand side of \eqref{eqNNobser}.

\subsection{Problem Statement}\label{problemformulate}

This paper will consider the CBF-based safety control design problem with an EEQ observer in the loop. Specifically, the problem that will be studied is given as follows.
\begin{problem}\label{prob1}
\emph{Given system \eqref{eqnsys}-\eqref{output} and its EEQ observer \eqref{observerdistance}, design a feedback controller $u(\hat x): \R^n\rightarrow \R^m$ such that the trajectory of the closed-loop system will stay inside  the safe set $\mathcal{C}$ defined in \eqref{setc}, i.e., $h(x(t))\geq 0$ for all $t\geq 0$.}
\end{problem}

\section{Main Results}\label{sec:control}
In this section, we reconstruct system \eqref{eqnsys} whose state $x$ cannot be known exactly into a model of $\hat x$, which is the state of the EEQ observer, by using FAT introduced in Section \ref{sec:FAT}. Based on that, we develop an adaptive CBF method to design the safe controller for CBFs with a relative degree 1 and a higher relative degree individually.

Recalling that  the state estimation error $e$ is defined as shown in \eqref{eqe}, system \eqref{eqnsys} can be rewritten as
\begin{equation}
    \dot{\hat{x}}=
    f(\hat{x})+g(\hat{x})u+\Lambda(x,\dot{e},\hat{x},u)\label{trsystem}
\end{equation}
where
$$
\Lambda(x,\dot{e},\hat{x},u)=\dot{e}+\delta f+\delta g u\in\R^n
$$
with $\delta f=f(x)-f(\hat{x})\in\R^n$ and $\delta g=g(x)-g(\hat{x})\in\R^{n\times m}$. Since $x$, $\dot{e}$, $\hat{x}$ are solutions of the closed-loop system composed of systems \eqref{eqnsys}-\eqref{output} and its EEQ observer \eqref{observerdistance}, they are variables with respect to time $t$. Thus, $\Lambda(x,\dot{e},\hat{x},u)$ can also be seen as a function of $t$, which can be  approximated by using trigonometric functions as the basis function as follows \cite{wang2021function,bai2020function}:
\begin{equation}
    \Lambda (x,\dot{e},\hat{x},u)
    =\sum_{i=1}^N \theta_i \varphi_i(t)+\epsilon_\Lambda\label{fattheta}
\end{equation}
where $\varphi_i\in\R$ represents the set of trigonometric scalar functions defined in \eqref{dii}, $N$ is a positive integer, $\epsilon_\Lambda\in\R^n$ denotes the  truncation error, and $\theta_i\in\R^n$ is a vector of parameters that are unknown constants.
Substituting \eqref{fattheta} into \eqref{trsystem} yields
\begin{equation}
    \dot{\hat{x}}=
    f(\hat{x})+g(\hat{x})u+\sum_{i=1}^N\theta_i \varphi_i(t)+\epsilon_\Lambda.\label{trsystem2}
\end{equation}
It can be seen that the reconstructed system \eqref{trsystem2} is a model of $\hat{x}$ containing unknown parameters. We will solve Problem \ref{prob1} by considering \eqref{trsystem2}  and using an adaptive control design method. The control input $u$ will be designed to render the set $\mathcal{C}$ safe with regard to system \eqref{trsystem2} in the presence of unknown parameters $\theta_i$.
Two assumptions regarding $\epsilon_\Lambda$ and $\theta_i$ are proposed as follows.
\begin{assumption}\cite{izadbakhsh2019fat,huang2004adaptive,bai2021motion}\label{assumpeps}
There exists a positive constant $E>0$ such that $\|\epsilon_\Lambda\|\leq E$. %\cite{bai2021adaptive}.
\end{assumption}
\begin{assumption}\cite{ebeigbe2019robust}\label{assumptheta}
   There exist constants $\bar{\theta}_i$ for $i=1,...,N$, such that the unknown parameter $\theta_i$ in \eqref{trsystem2} is bounded by $\bar{\theta}_i$,  i.e.,
    $$
    \|\theta_i\|\leq\bar{\theta}_i.
    $$
\end{assumption}

\begin{remark}
Given an arbitrary constant $E>0$, one can choose $N$ large enough to make the truncation error  $\|\epsilon_{\Lambda}\|$ smaller than $E$. Thus, Assumption \ref{assumpeps} can be always satisfied by choosing a large enough $N$. Although a better approximation accuracy can be achieved with a smaller $E$, the corresponding  computational burden may grow up rapidly with the increase of $N$, which is not desirable in real applications. Moreover, if $N$ is too large, Gibbs phenomenon, which degrades the approximation accuracy and induces oscillations, may appear. Our past experience indicates that in most cases, $N\leq 5$ is sufficient to guarantee a good approximation accuracy. \hfill$\Box$

\end{remark}

\subsection{Safe Control Design for  CBF with Relative Degree 1}

In this subsection,  a feedback controller will be designed for system \eqref{trsystem2} to solve Problem \ref{prob1} where the CBF $h$ is assumed to have a relative degree 1.

Note that the state variable of system \eqref{trsystem2} is $\hat{x}$ instead of $x$, while $h(x)$ is a function of $x$.
Assume $h(x)$ is a global Lipschitz function. Thus,
there exists a constant $L>0$ as the Lipschitz constant of $h$, such that for all $x, \hat x$,
\begin{equation}
    |h(x)-h(\hat{x})|\leq L \|x-\hat{x}\|\label{Liph}
\end{equation}
where $x,\hat x$ are the true and estimated states of system \eqref{eqnsys}, respectively, which implies that
\begin{equation}
    h(x)\geq h(\hat{x})-L\|x-\hat{x}\|\geq
    h(\hat{x})-LM(t)\label{newh}
\end{equation}
where the last inequality is from \eqref{observerdistance}. Define a time-varying function $h:\mathbb{R}^n\times \mathbb{R}\to\mathbb{R}$ as
\begin{align}\label{eqh}
h_0(x,t)=h(x)-LM(t).
\end{align}
From \eqref{newh} and the fact that $M(t)\geq 0$ for $t\geq 0$, it is clear that $h_0(\hat{x},t)\geq 0$ implies $h(x)\geq 0$ for any $t\geq 0$. Therefore, if a controller can be designed such that $h_0(\hat x,t)\geq 0$  for $t\geq 0$, then the forward invariance of $\mathcal{C}$ will be guaranteed.

\begin{remark}\label{remarklipschitz}
Assume that the set of initial conditions $x_0,\hat x_0$ renders  $M(t)\leq M_0$ for $t\geq 0$, where $M_0>0$ is a fixed positive constant. Define $\mathcal{C}^{M_0}\triangleq\mathcal{C}\oplus B_{M_0}(0)$ where $\oplus$ is the Minkowski sum and $B_{M_0}(0)$ is the 2-norm ball at the origin with a radius of $M_0$.
Then the global Lipschitz requirement on $h$ shown in \eqref{Liph} can be relaxed to the condition that $h$ has a Lipschitz constant $L$ on $\mathcal{C}^{M_0}$.
\end{remark}

Define a new function $h_\epsilon:\R^n\times \R\rightarrow \R$ as
\begin{equation}\label{eqheps}
    h_\epsilon(\hat{x},t) = h_0(\hat{x},t)-\epsilon
\end{equation}
where $h_0$ is given in \eqref{eqh}, and $\epsilon>0$ is a positive constant that will be determined later.  The following theorem is the main result of this subsection.

\begin{theorem}\label{theorem1}
Consider system \eqref{trsystem2} with unknown parameters $\theta_i$ and a set $\mathcal{C}$ defined in \eqref{setc} for a continuously differentiable function $h$. Suppose that $h$ has a relative degree 1 and satisfies \eqref{Liph} for some $L>0$. Suppose that Assumption \ref{assumpeps} and \ref{assumptheta} hold. Suppose that the parameter estimation $\hat\theta_i$ is  governed by the following adaptive law:
\begin{equation}\label{adaptivelaw}
    \dot{\hat{\theta}}_i = -\frac{\bar{\theta}_i^2}{2\epsilon}\frac{\partial  h_\epsilon}{\partial \hat{x}}\varphi_i-\mu\hat{\theta}_i,
\end{equation}
where $\mu>0$ a  positive constant, and $h_\epsilon$ is given in \eqref{eqheps}. If $\epsilon$ is chosen such that
\begin{equation}\label{initialcondition}
   0<\epsilon \leq  \frac{h_0(\hat{x}(0),0)}{ \bigg(N+\sum_{i=1}^N\bigg(\frac{2\|\hat{\theta}_i(0)\|}{\bar{\theta}_i}+\frac{\|\hat{\theta}_i(0)\|^2}{\bar{\theta}_i^2} \bigg) \bigg)},
\end{equation}
then any Lipschitz continuous controller  $u(x) \in K_{BF}(\hat{x},\hat{\theta},t)$  where
\begin{IEEEeqnarray}{rCl}
    \hspace{-6mm}K_{BF}(\hat{x},\hat{\theta},t)\triangleq\bigg\{u\in\mathbb{R}^m \, | \, && L_g h_\epsilon u+L_{\tilde{f}} h_\epsilon-\mu N\epsilon+\frac{\partial h_\epsilon}{\partial t}\nonumber\\
    &&-\bigg\|{\frac{\partial h_\epsilon}{\partial \hat{x}}}\bigg\|E+\mu h_\epsilon\geq 0\bigg\},\label{controlinput}
\end{IEEEeqnarray}
with $\tilde{f}=f+\sum_{i=1}^N \hat{\theta}_i\varphi_i$ will guarantee the safety of $\mathcal{C}$ in regard to system \eqref{trsystem2}.
\end{theorem}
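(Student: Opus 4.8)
The plan is to reduce the safety requirement to a statement about the observer state and then run an adaptive control-barrier (Lyapunov--barrier) argument on the reconstructed model \eqref{trsystem2}. The reduction is already available: by \eqref{newh}--\eqref{eqh} the inequality $h_0(\hat x,t)\ge 0$ forces $h(x)\ge 0$, and since $h_0=h_\epsilon+\epsilon$ it suffices to prove $h_\epsilon(\hat x(t),t)\ge-\epsilon$ for all $t\ge0$. First I would differentiate $h_\epsilon$ along \eqref{trsystem2}. Adding and subtracting $\frac{\partial h_\epsilon}{\partial\hat x}\sum_i\hat\theta_i\varphi_i$ and using $\tilde f=f+\sum_i\hat\theta_i\varphi_i$ gives
\begin{equation*}
\dot h_\epsilon=L_{\tilde f}h_\epsilon+L_gh_\epsilon u+\frac{\partial h_\epsilon}{\partial t}+\frac{\partial h_\epsilon}{\partial\hat x}\sum_{i=1}^N(\theta_i-\hat\theta_i)\varphi_i+\frac{\partial h_\epsilon}{\partial\hat x}\epsilon_\Lambda .
\end{equation*}
The truncation term is handled by Assumption \ref{assumpeps} through $\frac{\partial h_\epsilon}{\partial\hat x}\epsilon_\Lambda\ge-\|\frac{\partial h_\epsilon}{\partial\hat x}\|E$, which is precisely the slack term $-\|\frac{\partial h_\epsilon}{\partial\hat x}\|E$ built into $K_{BF}$.

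The core of the argument is a composite barrier function $W=h_\epsilon-\sum_{i=1}^N\frac{\epsilon}{\bar\theta_i^2}\|\tilde\theta_i\|^2$ with $\tilde\theta_i=\theta_i-\hat\theta_i$. Since each $\theta_i$ is constant, $\dot{\tilde\theta}_i=-\dot{\hat\theta}_i$, and inserting the adaptive law \eqref{adaptivelaw} yields
\begin{equation*}
\frac{\diff}{\diff t}\sum_{i=1}^N\frac{\epsilon}{\bar\theta_i^2}\|\tilde\theta_i\|^2=\frac{\partial h_\epsilon}{\partial\hat x}\sum_{i=1}^N\tilde\theta_i\varphi_i+\sum_{i=1}^N\frac{2\epsilon\mu}{\bar\theta_i^2}\tilde\theta_i^{\top}\hat\theta_i .
\end{equation*}
The first summand is exactly the parameter-error term appearing in $\dot h_\epsilon$, so forming $\dot W$ cancels it identically; this cancellation is the reason the gain $\frac{\bar\theta_i^2}{2\epsilon}$ is chosen as in \eqref{adaptivelaw}. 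The leakage (i.e.\ $\sigma$-modification) term $-\mu\hat\theta_i$ leaves the cross term $\sum_i\frac{2\epsilon\mu}{\bar\theta_i^2}\tilde\theta_i^{\top}\hat\theta_i$, which I would combine with $-\mu h_\epsilon$ using the identity $\|\tilde\theta_i\|^2+2\tilde\theta_i^{\top}\hat\theta_i=\|\theta_i\|^2-\|\hat\theta_i\|^2$.

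Next I would impose $u\in K_{BF}$, i.e.\ $L_gh_\epsilon u+L_{\tilde f}h_\epsilon+\frac{\partial h_\epsilon}{\partial t}\ge\mu N\epsilon+\|\frac{\partial h_\epsilon}{\partial\hat x}\|E-\mu h_\epsilon$, and substitute into $\dot W$. The parameter-error terms cancel, the two $E$-terms cancel, and writing $h_\epsilon=W+\sum_i\frac{\epsilon}{\bar\theta_i^2}\|\tilde\theta_i\|^2$ leaves
\begin{equation*}
\dot W\ge-\mu W+\mu\epsilon\sum_{i=1}^N\Big(1-\frac{\|\theta_i\|^2}{\bar\theta_i^2}\Big)+\mu\sum_{i=1}^N\frac{\epsilon}{\bar\theta_i^2}\|\hat\theta_i\|^2 .
\end{equation*}
By Assumption \ref{assumptheta}, $\|\theta_i\|^2/\bar\theta_i^2\le1$, so both correction sums are nonnegative and $\dot W\ge-\mu W$. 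The comparison lemma then gives $W(t)\ge W(0)e^{-\mu t}\ge\min\{0,W(0)\}$. Finally, bounding $\|\tilde\theta_i(0)\|^2\le(\bar\theta_i+\|\hat\theta_i(0)\|)^2$ shows that the bound \eqref{initialcondition} is exactly what makes $W(0)\ge-\epsilon$; hence $W(t)\ge-\epsilon$ for all $t$, and because the penalty sum is nonnegative, $h_\epsilon\ge W\ge-\epsilon$, i.e.\ $h_0\ge0$ and thus $h(x)\ge0$.

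The main obstacle is designing the penalty weights $\epsilon/\bar\theta_i^2$ so that two cancellations occur simultaneously: the adaptive-law gain must make the $\tilde\theta_i$-term in $\dot W$ vanish, while the same weights must turn the $\sigma$-modification remainder into $\|\theta_i\|^2-\|\hat\theta_i\|^2$, which Assumption \ref{assumptheta} caps by $N\epsilon$ to absorb the $\mu N\epsilon$ slack in $K_{BF}$. The second subtlety is recognizing that one only needs $W(0)\ge-\epsilon$ rather than $W(0)\ge0$; this weaker requirement is what the stated bound on $\epsilon$ in \eqref{initialcondition} delivers, and the $-\epsilon$ offset in $h_\epsilon=h_0-\epsilon$ is precisely the margin that converts $W(t)\ge-\epsilon$ back into $h_0\ge0$. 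I would also note in passing that Lipschitz continuity of $u$ guarantees a well-defined closed-loop solution on which the comparison argument runs.
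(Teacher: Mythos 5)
Your proof is correct and essentially identical to the paper's: your composite function $W = h_\epsilon - \sum_{i=1}^N \frac{\epsilon}{\bar\theta_i^2}\|\tilde\theta_i\|^2$ is exactly the paper's composite barrier $\bar h = h_0 - \sum_{i=1}^N \frac{\epsilon}{\bar\theta_i^2}\tilde\theta_i^\top\tilde\theta_i$ shifted by the constant $\epsilon$, so your target $W(t)\geq-\epsilon$ is the paper's $\bar h(t)\geq 0$, and both arguments use the same adaptive-law cancellation of the $\tilde\theta_i$-term, the same absorption of $\epsilon_\Lambda$ by the $E$-slack in $K_{BF}$, the same comparison-lemma step, and the same role for \eqref{initialcondition}. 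The only cosmetic difference is that you use the exact identity $2\tilde\theta_i^\top\hat\theta_i = \|\theta_i\|^2-\|\hat\theta_i\|^2-\|\tilde\theta_i\|^2$ where the paper applies the bound $\tilde\theta_i^\top\hat\theta_i \leq \tfrac12\|\theta_i\|^2 - \tfrac12\|\tilde\theta_i\|^2$, which changes nothing in the conclusion.
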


\begin{proof}
Define a composite CBF candidate as
\begin{equation}
    \bar{h}(\hat{x},\hat{\theta},t)=h_0(\hat{x},t)-\sum_{i=1}^N\frac{\epsilon}{\bar{\theta}_i^2}\tilde{\theta}_i^\top\tilde{\theta}_i,\label{barh}
\end{equation}
where $\tilde{\theta}_i=\theta_i-\hat{\theta}_i$ represents the parameter estimation error and $\hat{\theta}=[\hat{\theta}_1,\cdots,\hat{\theta}_N]^T$.
The time derivative of $\bar{h}$ is
\begin{IEEEeqnarray}{rCl}
&&\dot{\bar{h}}(\hat{x},\hat{\theta},t)\nonumber \\
&&={\frac{\partial h_0}{\partial \hat{x}}}^\top\!(f(\hat x)
\!+\!g(\hat x)u\!+\!\sum_{i=1}^N\theta_i\varphi_i\!+\!\epsilon_\Lambda)
\!+\!\frac{\partial h_0}{\partial t}\!+\!\sum_{i=1}^N\frac{2\epsilon}{\bar{\theta}_i^2}
\tilde{\theta}_i^\top\dot{\hat{\theta}}_i\nonumber\\
&&={\frac{\partial h_\epsilon}{\partial \hat{x}}}^\top
(f(\hat x)+g(\hat x)u+\sum_{i=1}^N\hat{\theta}_i\varphi_i
+\epsilon_\Lambda)+\frac{\partial h_\epsilon}{\partial t}\nonumber\\
&&\quad +\sum_{i=1}^N\tilde{\theta}_i^\top\bigg(\frac{\partial h_\epsilon}{\partial \hat{x}}\varphi_i+\frac{2\epsilon}{\bar{\theta}_i^2}\dot{\hat{\theta}}_i \bigg).\label{dotbarh}
\end{IEEEeqnarray}
Substituting \eqref{adaptivelaw} into \eqref{dotbarh} gives
\begin{equation*}
    \dot{\bar{h}}(\hat{x},\hat{\theta},t)=L_g h_\epsilon u+L_{\tilde{f}} h_\epsilon+{{\frac{\partial h_\epsilon}{\partial \hat{x}}}}^\top\epsilon_\Lambda+\frac{\partial h_\epsilon}{\partial t}-\sum_{i=1}^N \frac{2\mu\epsilon}{\bar{\theta}_i^2}\tilde{\theta}_i^\top\hat{\theta}_i.
\end{equation*}
It can be seen that $u\in K_{BF}(\hat{x},\hat{\theta},t)$ indicates
\begin{IEEEeqnarray}{rCl}
    \dot{\bar{h}}(\hat{x},\hat{\theta},t)
    &\geq&\! {{\frac{\partial h_\epsilon}{\partial \hat{x}}}}^\top\!\epsilon_\Lambda\!+\!\bigg\|{\frac{\partial h_\epsilon}{\partial \hat{x}}}\bigg\|\!E-\mu  h_\epsilon\!-\!\sum_{i=1}^N\! \frac{2\mu\epsilon}{\bar{\theta}_i^2}\tilde{\theta}_i^\top\hat{\theta}_i\!+\!\mu N\epsilon\nonumber\\
    &\geq& -\mu h_0(\hat{x},t)+\mu (N+1) \epsilon-\sum_{i=1}^N \frac{2\mu\epsilon}{\bar{\theta}_i^2}\tilde{\theta}_i^\top\hat{\theta}_i\label{tilhdot}
\end{IEEEeqnarray}
where the first inequality is from \eqref{controlinput}, and the second inequality is derived from the assumption that $\|\epsilon_\Lambda\|\leq E$, which is stated in Assumption \ref{assumpeps}.
Since
\begin{align*}
\tilde{\theta}_i^\top\hat{\theta}_i&=\tilde{\theta}_i^\top(\theta_i-\tilde{\theta}_i)    \leq\frac{\theta_i^\top\theta_i}{2}-\frac{\tilde{\theta}_i^\top\tilde{\theta}_i}{2},
\end{align*}
from \eqref{tilhdot} one gets
\begin{equation}
    \dot{\bar{h}}(\hat{x},\hat{\theta},t)
    \geq -\mu  \bar{h}(\hat{x},\hat{\theta},t)+\mu (N+1) \epsilon-\sum_{i=1}^N \frac{\mu\epsilon}{\bar{\theta}_i^2}\theta_i^\top\theta_i.
\end{equation}
As $\|\theta_i\|\leq\bar{\theta}_i$, one obtains
\begin{equation}
    \dot{\bar{h}}(\hat{x},\hat{\theta},t)
    \geq -\mu  \bar{h}(\hat{x},\hat{\theta},t).\label{dotbarhinequality}
\end{equation}
By the comparison lemma \cite[Page 103]{khalil2002nonlinear}, we get
\begin{equation}
    \bar{h}(t)\geq \bar{h}(\hat{x}(0),\hat{\theta}(0),0)e^{-\mu t}.\label{eqbarh}
\end{equation}
Since
\begin{align}
   \bar{h}(\hat{x},\hat{\theta},t)&= h_0(\hat{x},t)\!-\!\sum_{i=1}^N\frac{\epsilon}{\bar{\theta}_i^2}(\hat{\theta}_i^\top\hat{\theta}_i-2\theta_i^\top\hat{\theta}_i+\theta_i^\top\theta_i)\nonumber\\
   &\geq h_0(\hat{x},t)\!-\!\sum_{i=1}^N\frac{\epsilon}{\bar{\theta}_i^2}(\|\hat{\theta}_i\|^2+2\|\hat{\theta}_i\|\bar{\theta}_i+\bar{\theta}_i^2),\label{initialsystem}
\end{align}
when $t=0$, substituting \eqref{initialcondition} into \eqref{initialsystem} yields
\begin{IEEEeqnarray}{rCl}
    \bar{h}(\hat{x}(0),\hat{\theta}(0),0)&\geq&\epsilon \bigg(N+\sum_{i=1}^N\bigg(\frac{2\|\hat{\theta}_i(0)\|}{\bar{\theta}_i}+\frac{\|\hat{\theta}_i(0)\|^2}{\bar{\theta}_i^2} \bigg) \bigg)\nonumber\\
    &&-\sum_{i=1}^N\frac{\epsilon}{\bar{\theta}_i^2}(\|\hat{\theta}_i(0)\|^2+2\|\hat{\theta}_i(0)\|\bar{\theta}_i+\bar{\theta}_i^2)\nonumber\\
    &=&0.\label{eqbarh0}
\end{IEEEeqnarray}
Therefore, from \eqref{eqbarh} and \eqref{eqbarh0} one gets $\bar{h}(t)\geq 0$ for $t\geq 0$.
From \eqref{barh}, it can be seen that $\bar{h}(t)\geq 0$ indicates $h_0(\hat x,t)\geq 0$. Moreover, $h_0(\hat x,t)\geq 0$ implies $h(x)\geq 0$. Hence, the set $\mathcal{C}$ is safe with regard to system \eqref{trsystem2}.
\end{proof}

By Theorem \ref{theorem1}, the safe controller for solving Problem \ref{prob1} is obtained by  the following CBF-QP:
\begin{align}
\min_{u} \quad & \|u-u_d\|^2\label{cbfQP}\tag{CBF-QP1}\\
\textrm{s.t.} \quad & L_g h_\epsilon u+L_{\tilde{f}} h_\epsilon+\frac{\partial h_\epsilon}{\partial t}-\bigg\|{\frac{\partial h_\epsilon}{\partial \hat{x}}}\bigg\|E+\mu h_\epsilon-\mu N \epsilon\geq 0\nonumber
\end{align}
where $u_d$ represents a nominal control that may be unsafe control law,
$\tilde{f}=f+\sum_{i=1}^N \hat{\theta}_i\varphi_i$, and $\hat{\theta}_i$ is updated by the adaptive law shown in \eqref{adaptivelaw}.

\subsection{Safe Control Design for  CBF with High Relative Degree}
In this subsection,  we will consider solving Problem \ref{prob1} for a CBF $h$ that is assumed to have a relative degree $r\geq 2$.
Recall the definition of functions $s_k(x(t))$ shown in \eqref{liftedh}. Assume that $s_k(x(t))$ is globally Lipschitz and has Lipschitz constant $L_k>0$ for $k=0,1,\cdots,r-1$. Note that this requirement can be relaxed as metioned in Remark \ref{remarklipschitz}.
Define a family of sets:
\begin{equation}
    \mathcal{C}_k=\{x\in\mathbb{R}^n:  s_k(x)\geq 0\}, \ k=0,1,\cdots,r-1,
\end{equation}
and a family of functions:
\begin{equation}
s_k^M(x,t)=s_k(x)-L_k M(t), \ k=0,1,\cdots,r-1. \label{skm}\\
\end{equation}
\begin{theorem}\label{theorem2}
Consider the system \eqref{trsystem2} with unknown parameters $\theta_i$ and a set  $\mathcal{C}$ defined in \eqref{setc} for a $C^r$ function $h$ that has a relative degree $r$. Suppose that $s_k(x)$ has Lipschitz constant $L_k$  for $k=0,1,...,r$.  Suppose that Assumption \ref{assumpeps} and \ref{assumptheta} hold and the parameter estimation $\hat\theta_i$ is  governed by the following adaptive law:
\begin{equation}\label{adaptivelawhighre}
    \dot{\hat{\theta}}_i = -\frac{\bar{\theta}_i^2}{2\epsilon}\frac{\partial  s_\epsilon}{\partial \hat{x}}\varphi_i-\mu\hat{\theta}_i,
\end{equation}
where $\mu>0$ a  positive constant and $s_\epsilon(\hat x,t)=s_{r-1}^M(\hat x,t)-\epsilon$ with $s_k^M$ defined in \eqref{skm}. If there exists $\epsilon$ such that
\begin{equation}
    0<\epsilon\leq\frac{\min \{ s_0^M(\hat{x}(0),0),s_1^M(\hat{x}(0),0),\cdots,s_{r-1}^M(\hat{x}(0),0)\}}{\bigg(N+\sum_{i=1}^N\bigg(\frac{2\|\hat{\theta}_i(0)\|}{\bar{\theta}_i}+\frac{\|\hat{\theta}_i(0)\|^2}{\bar{\theta}_i^2} \bigg) \bigg)},\label{initialconditionhighre}
\end{equation}
then any Lipschitz continuous controller $u(x) \in K_{BF}(\hat{x},\hat{\theta},t)$  where
\begin{IEEEeqnarray}{rCl}\label{end}
    \hspace{-6mm}K_{BF}(\hat{x},\hat{\theta},t)\triangleq\bigg\{u\in\mathbb{R}^m \, | \, && L_g s_\epsilon u+L_{\tilde{f}} s_\epsilon-\mu N\epsilon+\frac{\partial s_\epsilon}{\partial t}\nonumber\\
    &&-\bigg\|{\frac{\partial s_\epsilon}{\partial \hat{x}}}\bigg\|E+\mu s_\epsilon\geq 0\bigg\},
\end{IEEEeqnarray}
with $\tilde{f}=f+\sum_{i=1}^N \hat{\theta}_i\varphi_i$ will guarantee the forward invariance of $\mathcal{C}$.
\end{theorem}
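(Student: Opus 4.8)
The plan is to reduce Theorem~\ref{theorem2} to the relative-degree-1 result of Theorem~\ref{theorem1} applied to the top lifted function $s_{r-1}^M$, and then to propagate non-negativity downward through the cascade of first-order filters in \eqref{liftedh} until $h(x(t))\geq 0$ is recovered. The starting observation is that, because $h$ has relative degree $r$, each $s_k$ for $k=0,\dots,r-1$ is a function of $x$ alone (the control does not enter until one differentiates $s_{r-1}$), so $s_{r-1}$ has relative degree $1$ with respect to the reconstructed system \eqref{trsystem2}. Hence $L_g s_\epsilon=\frac{\partial s_{r-1}}{\partial \hat x}g(\hat x)$ is exactly the relative-degree-1 Lie derivative appearing in \eqref{end}, and $s_\epsilon=s_{r-1}^M-\epsilon$ plays precisely the role that $h_\epsilon$ played in Theorem~\ref{theorem1}.

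First I would define the composite candidate $\bar s(\hat x,\hat\theta,t)=s_{r-1}^M(\hat x,t)-\sum_{i=1}^N\frac{\epsilon}{\bar\theta_i^2}\tilde\theta_i^\top\tilde\theta_i$, in direct analogy with $\bar h$ in \eqref{barh}. Differentiating along \eqref{trsystem2}, substituting the adaptive law \eqref{adaptivelawhighre}, and using $u\in K_{BF}(\hat x,\hat\theta,t)$ from \eqref{end} together with Assumption~\ref{assumpeps}, the computation is word-for-word that of Theorem~\ref{theorem1} and yields $\dot{\bar s}\geq-\mu\bar s$. The comparison lemma then gives $\bar s(t)\geq\bar s(0)e^{-\mu t}$. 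Since $s_{r-1}^M(\hat x(0),0)$ is at least the minimum in the numerator of \eqref{initialconditionhighre}, the initial-condition bound forces $s_{r-1}^M(\hat x(0),0)\geq\epsilon\big(N+\sum_{i=1}^N(\frac{2\|\hat\theta_i(0)\|}{\bar\theta_i}+\frac{\|\hat\theta_i(0)\|^2}{\bar\theta_i^2})\big)$, which is exactly the inequality making $\bar s(0)\geq 0$ as in \eqref{eqbarh0}. Hence $\bar s(t)\geq 0$, which by the definition of $\bar s$ gives $s_{r-1}^M(\hat x(t),t)\geq 0$ for all $t\geq 0$. Invoking the Lipschitz bound \eqref{skm} and the observer bound \eqref{observerdistance}, this yields $s_{r-1}(x(t))\geq s_{r-1}^M(\hat x(t),t)\geq 0$ for all $t\geq 0$.

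The remaining and more delicate step is the downward cascade on the true trajectory. From \eqref{liftedh}, along any solution of \eqref{eqnsys} we have the identity $\dot s_{k-1}=s_k-\la_k s_{k-1}$, so $\dot s_{k-1}\geq-\la_k s_{k-1}$ whenever $s_k\geq 0$. The $\min$ in \eqref{initialconditionhighre} guarantees $s_k^M(\hat x(0),0)\geq 0$ for every $k$, so \eqref{skm} and \eqref{observerdistance} give $s_k(x(0))\geq 0$ for all $k=0,\dots,r-1$. I would then argue by finite downward induction: having shown $s_k(x(t))\geq 0$ for all $t\geq 0$ (the base case $k=r-1$ was just established), the inequality $\dot s_{k-1}\geq-\la_k s_{k-1}$ and the comparison lemma give $s_{k-1}(x(t))\geq s_{k-1}(x(0))e^{-\la_k t}\geq 0$ for all $t\geq 0$. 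Iterating from $k=r-1$ down to $k=1$ yields $s_0(x(t))=h(x(t))\geq 0$, i.e. the forward invariance of $\mathcal{C}$.

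The main obstacle is not any single calculation but making the two halves fit together rigorously: verifying that $s_{r-1}^M(\hat x,t)$ genuinely has relative degree $1$ with respect to the $\hat x$-dynamics, so that the argument of Theorem~\ref{theorem1} transfers verbatim and $K_{BF}$ in \eqref{end} is well posed, and confirming that the single controller-enforced bound on the top function $s_{r-1}$ — the only one the controller can directly act on — together with the initial-condition bounds on the lower $s_k$, suffices to drive the entire cascade. The comparison-lemma step requires $s_k(x(t))\geq 0$ for all $t$, not merely at $t=0$, which is exactly why the induction must start from the top and proceed downward.
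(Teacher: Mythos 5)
Your proposal is correct and takes essentially the same route as the paper: apply the Theorem~\ref{theorem1} machinery to $s_\epsilon = s_{r-1}^M-\epsilon$ (valid since $s_{r-1}$ has relative degree $1$) to conclude $s_{r-1}(x(t))\geq 0$, use \eqref{initialconditionhighre} to obtain $s_k(x(0))\geq 0$ for every $k$, and then propagate non-negativity down the cascade \eqref{liftedh}. The only difference is that the paper delegates your explicit downward comparison-lemma induction to the cited result \cite[Theorem 1]{nguyen2016exponential}, so your final paragraph simply fills in the details of that citation.
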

\begin{proof}
Similar to \eqref{newh} and \eqref{eqh}, it can be seen that $s_k^M(\hat{x},t)\geq 0$ indicates $s_k(x)\geq 0$.
From \eqref{initialconditionhighre}, one gets $s_k^M(\hat{x}(0),0)\geq 0$, such that $s_k(x(0))\geq 0$.
Following the same procedure as shown in the proof of Theorem \ref{theorem1}, one gets $s_{r-1}(x)\geq 0$. As $s_k(x(0))\geq 0$ holds for each $k=0,1,\cdots,r-1$, according to \cite[Theorem 1]{nguyen2016exponential}, one has that $\mathcal{C}_k$ is forward invariant. Since $\mathcal{C}=\mathcal{C}_0$, the conclusion follows immediately.
\end{proof}

By Theorem \ref{theorem2}, the safe controller for solving Problem \ref{prob1} where the CBF has a relative degree $r\geq 2$ is obtained by  the following CBF-QP:
\begin{align}
\min_{u} \quad & \|u-u_d\|^2\label{cbfQP2}\tag{CBF-QP2}\\
\textrm{s.t.} \quad & L_g s_\epsilon u+L_{\tilde{f}} s_\epsilon+\frac{\partial s_\epsilon}{\partial t}
    -\bigg\|{\frac{\partial s_\epsilon}{\partial \hat{x}}}\bigg\|E+\mu s_\epsilon-\mu N \epsilon\geq 0\nonumber
\end{align}
where $u_d$ represents a nominal control, $\tilde{f}=f+\sum_{i=1}^N \hat{\theta}_i\varphi_i$, and $\hat{\theta}_i$ is updated by the adaptive law shown in \eqref{adaptivelawhighre}.

\section{Simulation}
\label{sec:simulation}
In this section, we use two numerical examples to illustrate the effectiveness of the proposed control  scheme.

\begin{example}\label{example1}
Consider a linear system
\begin{IEEEeqnarray*}{rCl}
    \dot{x}&=& \underbrace{\begin{bmatrix}
    -1&2&-2\\
    0&-1&1\\
    1&0&-1
    \end{bmatrix}}_{A}x+\underbrace{\begin{bmatrix}
    0\\1\\1
    \end{bmatrix}}_{B}u,\\
    y&=&\underbrace{\begin{bmatrix}
    1&1&0
    \end{bmatrix}}_{C}x.
\end{IEEEeqnarray*}
A Luenberger observer is designed as $\dot{\hat{x}}=A\hat{x}+Bu+L(y-C\hat{x})$
where $L=\begin{bmatrix}
-2.23029 & 0.190287 & 0.232326
\end{bmatrix}^\top$ such that $A-LC$ is Hurwitz. Consider the safe set $\mathcal{C}\triangleq\{ x =[x_1,x_2,x_3]^\top \in \R^3 : x_2\geq 1\}$, where the  corresponding CBF is given as
\begin{equation}
    h(x)=x_2-1.\label{ex1cbf1}
\end{equation}
Note that $h(x)$ shown in \eqref{ex1cbf1} has a relative degree 1. The initial conditions are chosen as $x(0)=\begin{bmatrix}
2 &2.2 &2
\end{bmatrix}^\top$, $\hat x(0)=\begin{bmatrix}
3 &3.5 &3
\end{bmatrix}^\top$, the parameters are selected as $E=0.1$, $N=3$, $\bar{\theta}_i=0.5$, $\hat{\theta}_i(0)=\begin{bmatrix}
0 &0 &0
\end{bmatrix}^\top$, $\epsilon=0.1$, $\mu=3.5$, and $M(t)$ is an exponential function as shown in \eqref{expobserver} with $D=2$ and $\lambda=-0.05$. It can be verified that the condition \eqref{initialcondition} in Theorem \ref{theorem1} is satisfied.
The safe controller is obtained by solving  \eqref{cbfQP}. The evolution of CBF $h$ is shown in Fig. \ref{fig3} (a). Also shown is the evolution of $h$  by utilizing the estimated state $\hat{x}$ as the true state in the traditional CBF-QP in \cite{Xu2015ADHS}.

Now consider another safe set $\mathcal{C}\triangleq\{ x=[x_1,x_2,x_3]^\top \in \R^3 : x_1\geq 1\}$, where the  corresponding CBF is given as
\begin{equation}
    h(x)=x_1-1.\label{ex1cbf2}
\end{equation}
Note that $h(x)$ shown in \eqref{ex1cbf2} has a relative degree 2. The safe controller is obtained by solving  \eqref{cbfQP2}, where the parameters are selected as $E=0.1$, $N=3$, $\bar{\theta}_i=0.5$, $\hat{\theta}_i(0)=\begin{bmatrix}
0 &0 &0
\end{bmatrix}^\top$, $\epsilon=0.1$, $\mu=10$, and $\lambda_1=2$. The initial conditions are $x(0)=\begin{bmatrix}
2.4 &-3 &-3
\end{bmatrix}^\top$, $\hat x(0)=\begin{bmatrix}
3.4 &-2 &-2
\end{bmatrix}^\top$, and $M(t)$ is the same as above. It is easy to check the inequality \eqref{initialconditionhighre} holds true.
The simulation result is presented in Fig. \ref{fig3} (b).

From the simulation results, it can be seen that regardless of the relative degree of $h(x)$, the set $\mathcal{C}$ is forward invariant when the proposed approach is used, whereas the safety constraint  is violated if the estimated state, $\hat x$, is directly used as the true state in the traditional CBF-QP control scheme.
\end{example}

\begin{figure*}[!b]
\centering
 \begin{subfigure}[b]{0.9\textwidth}
 \centering
    \includegraphics[width=0.45\textwidth]{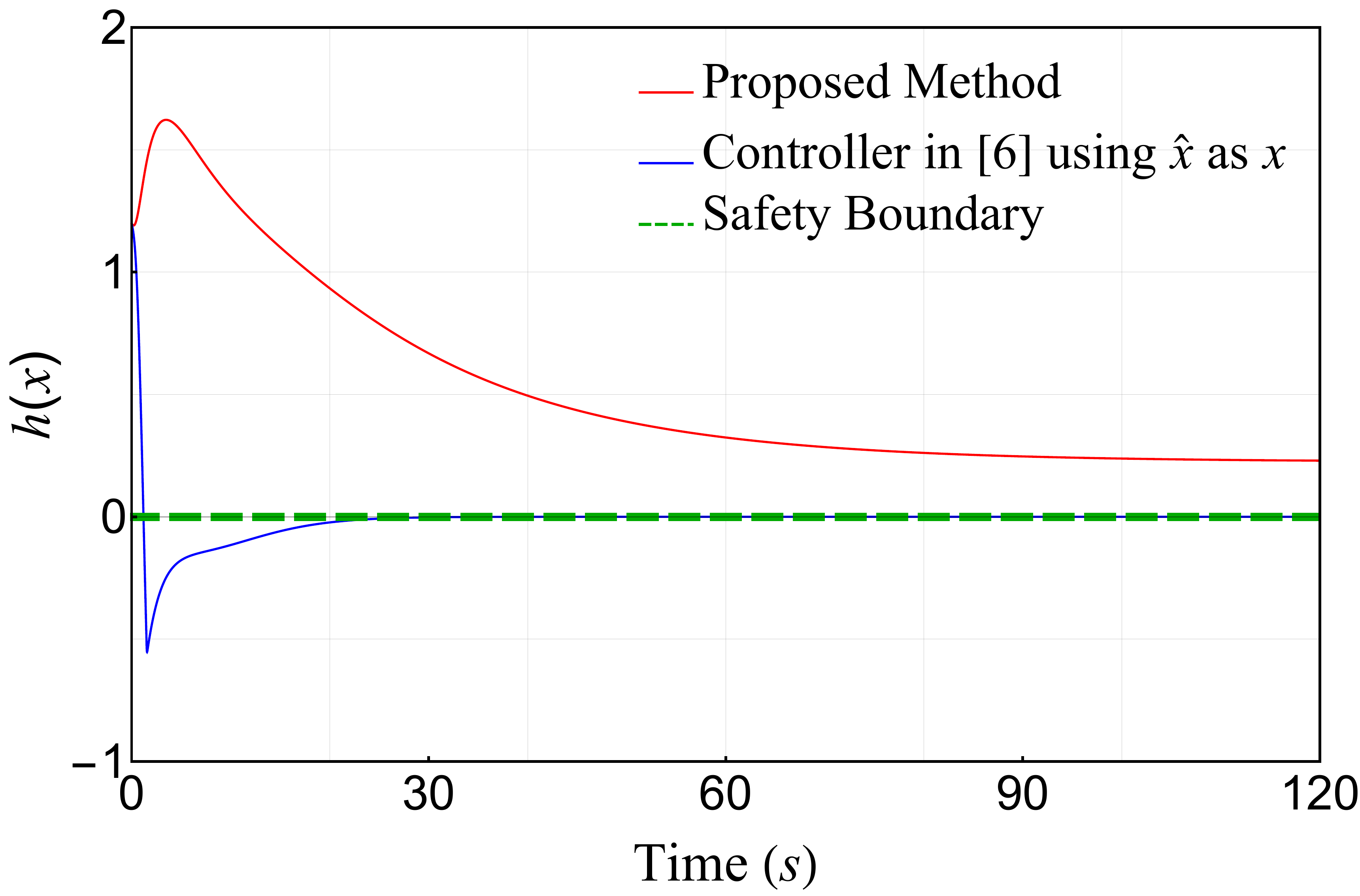}
    \includegraphics[width=0.45\textwidth]{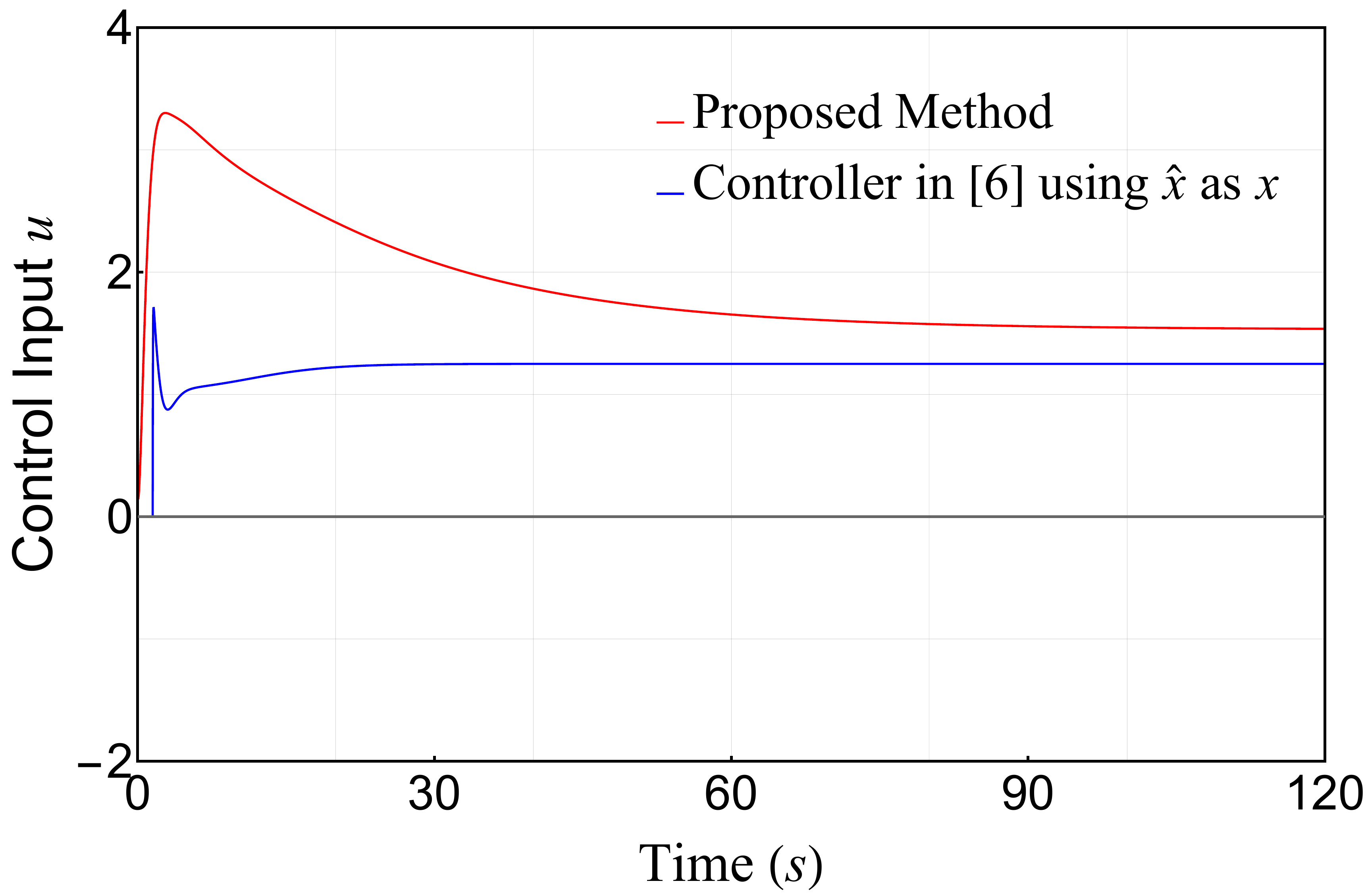}
    \caption{Simulation with CBF $h(x)=x_2-1$ that has a relative degree 1}
    \label{quad3d50}
  \end{subfigure}
  \begin{subfigure}[b]{0.9\textwidth}
  \centering
    \includegraphics[width=0.45\textwidth]{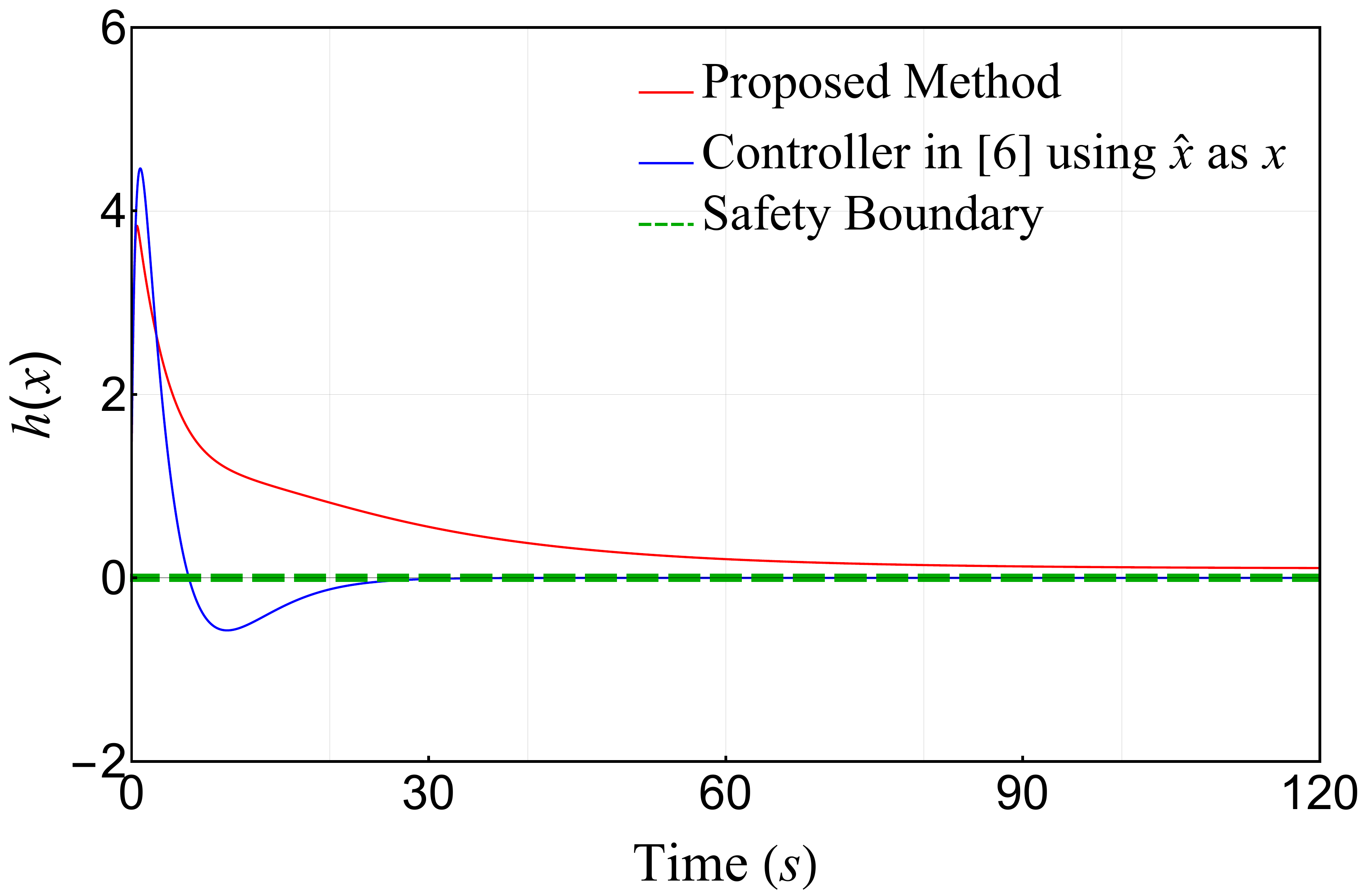}
    \includegraphics[width=0.45\textwidth]{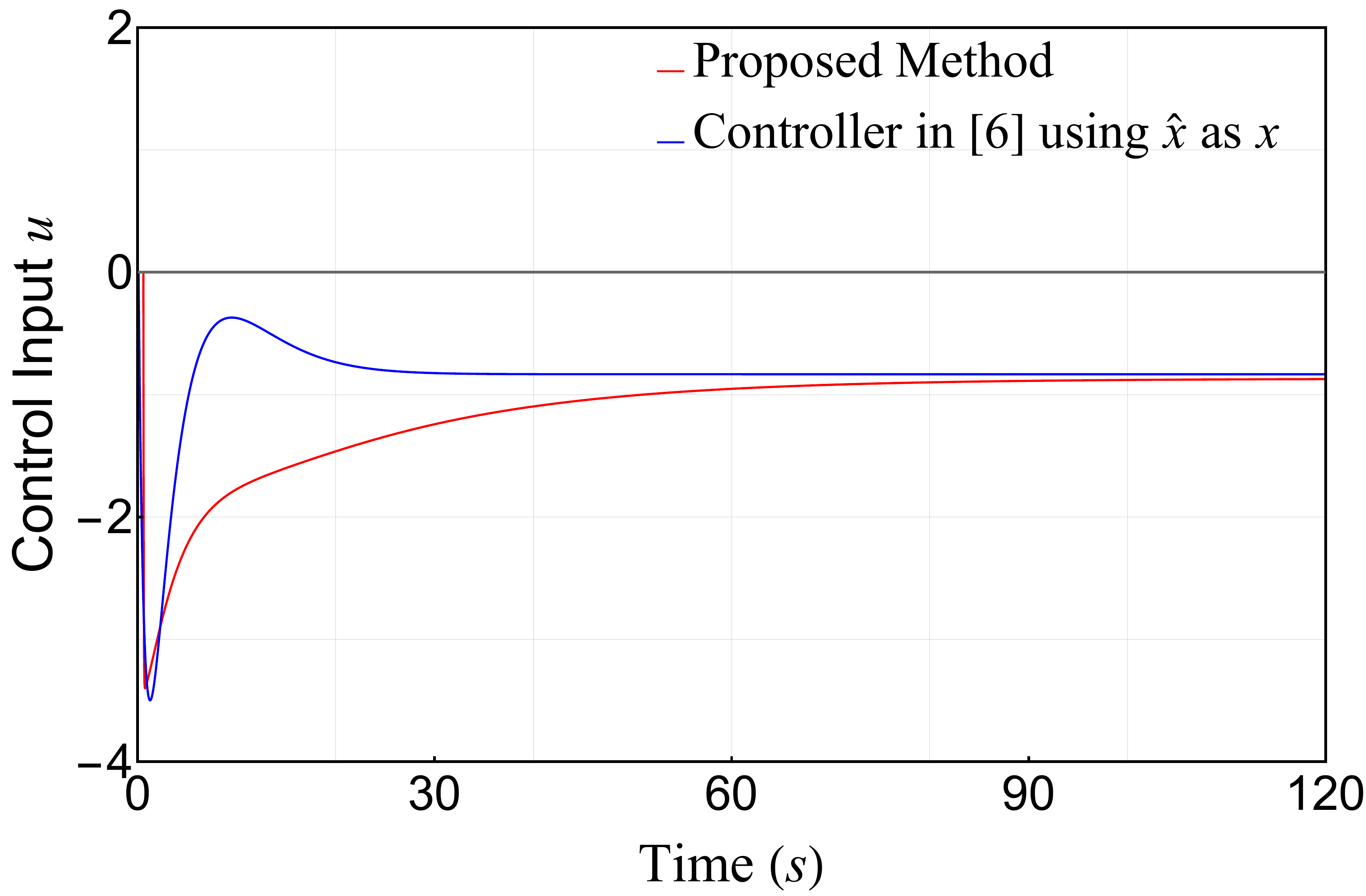}
    \caption{Simulation with CBF $h(x)=x_1-1$ that has a relative degree 2}
    \label{quad3d100}
  \end{subfigure}
  \caption{Simulation results of Example \ref{example1}. For either CBF, the values of $h$ is always positive when implementing the proposed CBF-QP  controller, meaning that safety is always ensured; in contrast,  safety is violated when utilizing the estimated state, $\hat x$, as the true state in the traditional CBF-QP controller.}
  \label{fig3}
\end{figure*}

%\subsection{Chaotic System}
\begin{example}\label{example2}
Consider the R\"{o}ssler system:
\begin{equation*}
    \begin{bmatrix}
    \dot x_1\\\dot x_2\\\dot x_3
    \end{bmatrix}=
    \begin{bmatrix}
    -x_2-x_3\\
    x_1+ax_2\\
    b+x_3(x_1-c)
    \end{bmatrix}+\bm{I}_{3\times 3} u,
\end{equation*}
where $a=b=0.2$, $c=5$ are chosen as same as those in \cite{mata2010exponential}, $u=\begin{bmatrix}
u_1&u_2&u_3
\end{bmatrix}$ denotes the control input and $\bm{I}_{3\times 3}$ is the identity matrix.
An exponentially stable observer is designed as in \cite{mata2010exponential}:
\begin{eqnarray*}
\begin{cases}
\dot{\hat{x}}_1\!=\!-\hat{x}_2\!-\!\hat{x}_3\!+\!q_1(x_1\!-\!\hat{x}_1)\!+\!q_2(x_1\!-\!\hat{x}_1)^m\!+\!u_1,\\
\dot{\hat{x}}_2\!=\!\hat{x}_1\!+\!a\hat{x}_2\!+\!s_1(x_1\!-\!\hat{x}_1)\!+\!s_2(x_1\!-\!\hat{x}_1)^m\!+\!u_2,\\
\dot{\hat{x}}_3\!=\!b\!+\!\hat{x}_3(\hat{x}_1\!-\!c)\!+\!r_1(x_1\!-\!\hat{x}_1)\!+r_2(x_1\!-\!\hat{x}_1)^m\!+\!u_3,\!\!\!\!\!\!\!\!\!
\end{cases}
\end{eqnarray*}
where
$q_1=3$, $s_1=-3$, $r_1=3$, $q_2=s_2=r_2=10$, and $m=3$. Consider the safe set $\mathcal{C}\triangleq\{ x=[x_1,x_2,x_3]^\top \in \R^3 : x_2\geq -1\}$, where the  corresponding CBF is given as
\begin{equation*}
    h(x)=x_2+1
\end{equation*}
which has a relative degree 1. The initial conditions are given as
$x(0)=\begin{bmatrix}
-0.5 &0.5 &3
\end{bmatrix}^\top$, $\hat x(0)=\begin{bmatrix}
0.2&2 &3
\end{bmatrix}^\top$, the parameters are selected as $E=0.1$, $N=3$, $\bar{\theta}_i=0.5$, $\hat{\theta}_i(0)=\begin{bmatrix}
0 &0 &0
\end{bmatrix}^\top$, $\epsilon=0.1$, $\mu=2.5$, and $M(t)$ is an exponential function as shown in \eqref{expobserver} with $D=2$ and $\lambda=-0.15$.
The simulation results are depicted in Fig. \ref{chaotic}, from which it can be seen that safety of the system is satisfied by the proposed CBF-QP controller in the presence of state estimation error while safety is violated by the traditional CBF-QP controller that uses the estimated state, $\hat x$, as the true state.

\end{example}

\begin{figure*}
  \centering
  \includegraphics[width=70mm]{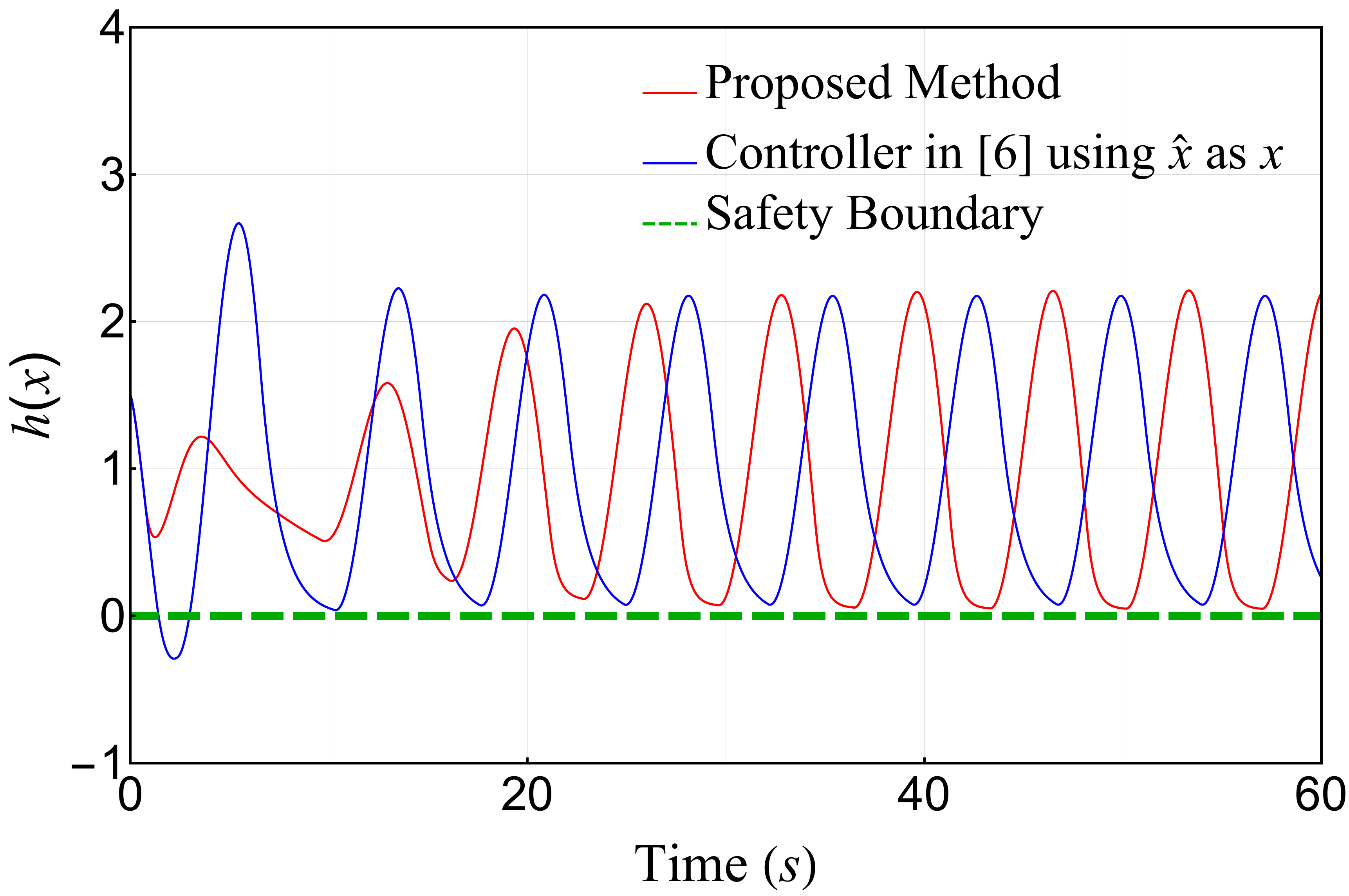}\hskip 5mm
%   \hspace{1in}
  \includegraphics[width=70mm]{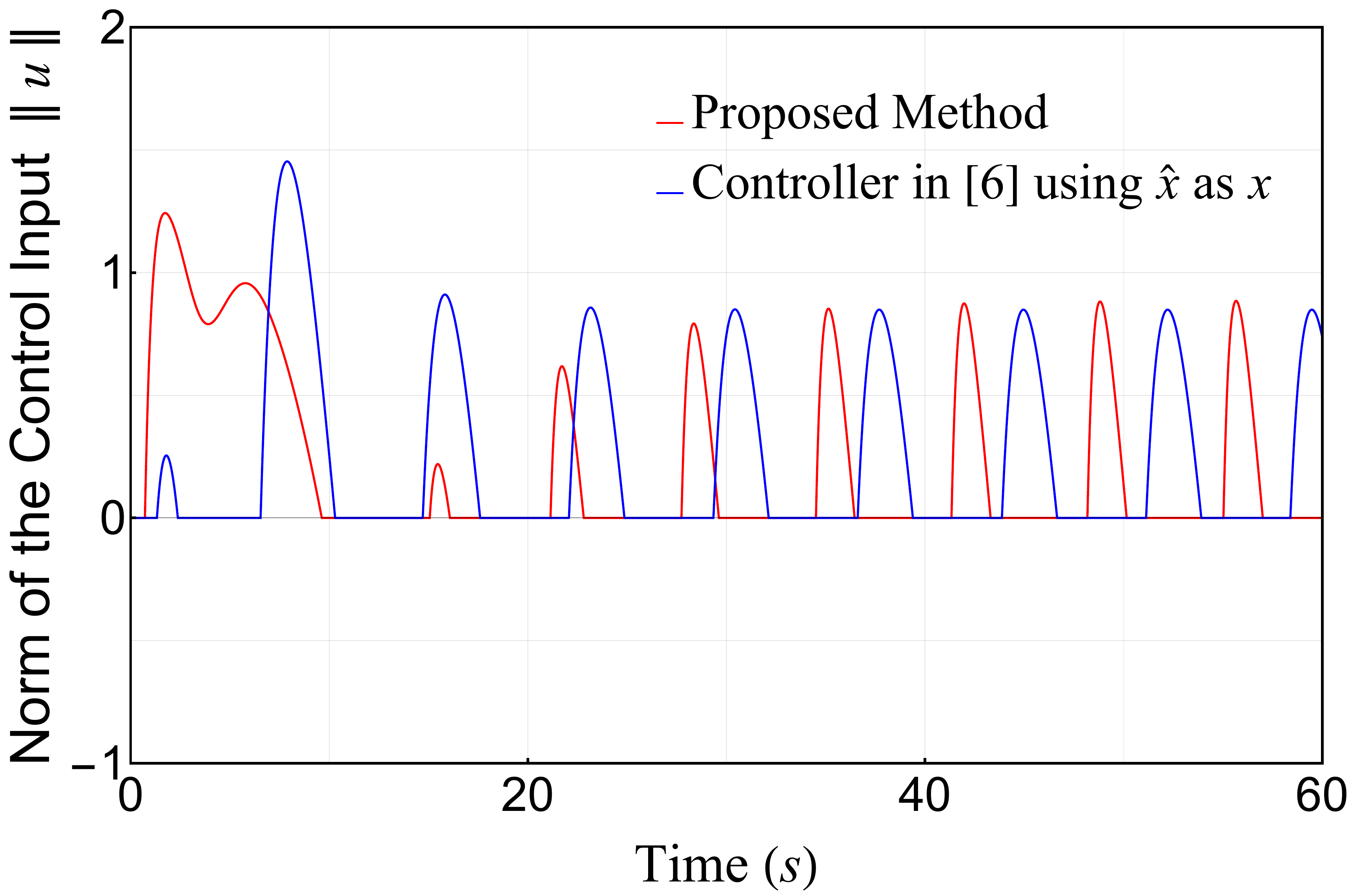}
  \caption{Simulation results of Example \ref{example2}. The CBF is  $h(x)=x_2+1$ that has a relative degree  1. The values of $h$ is always positive when implementing the proposed CBF-QP  controller, meaning that safety is always ensured, while safety is violated when the estimated state, $\hat x$, is used as the true state in the traditional CBF-QP controller. }\label{chaotic}
\end{figure*}

\section{Conclusions}
\label{sec:concl}
In this paper, a novel control framework that combines the EEQ observer and the CBF approach is proposed for safety-critical control systems with imperfect state measurements. The uncertainties introduced by state estimation error is approximated by FAT,
and the adaptive CBF technique is employed to design controllers via quadratic programs. The proposed control strategy is validated by numerical simulations. Future studies include developing EEQ observer design techniques using deep neural networks  and relaxing assumptions of this work.

%=====================================================================
%            Appendix                                                 |
%=====================================================================

% \appendix
% \setcounter{equation}{0}
% \renewcommand\theequation{A.\arabic{equation}}

\bibliographystyle{IEEEtran}
%\bibliography{./observerCBF}

% \bibliography{Reference/nonholonomic,Reference/adaptive1,Reference/adaptive2,Reference/adaptive3,Reference/Icra,Reference/Iros,Reference/self,Reference/adaptive,Reference/nonsquare1,Reference/underactuate,Reference/underactuate1}
\end{document}